\numberwithin{equation}{section}
\newtheorem{thm}{Theorem}
\newtheorem{theo}[thm]{Theorem}
\newtheorem{lemma}[thm]{Lemma}
\newtheorem{proposition}[thm]{Proposition}
\newtheorem{rmk}[thm]{Remark}
\newtheorem*{theorem*}{Theorem}
\newtheorem*{definition}{Definition}
\newcommand{\dd}{{d}}
\newcommand{\T}{\mathbb{T}}
\newcommand{\brkt}[1]{\left(#1\right)}
\newcommand{\abs}[1]{\left|#1\right|}
\title[Stein and Zygmund theorems for $H^{\log} (\mathbb{R}^d)$]{$L\log \log L$ versions of Stein's and Zygmund's theorems for the Hardy space $H^{\log}(\mathbb{R}^d)$}
\author[Bakas]{Odysseas Bakas}
\address{Centre for Mathematical Sciences, Lund University, 221 00 Lund, Sweden}
\email{odysseas.bakas@math.lu.se}
\author[Rodr\'iguez-L\'opez]{Salvador Rodr\'iguez-L\'opez}
\address{Department of Mathematics, Stockholm University, 106 91 Stockholm, Sweden}
\email{s.rodriguez-lopez@math.su.se}
\author[Sola]{Alan Sola}
\address{Department of Mathematics, Stockholm University, 106 91 Stockholm, Sweden}
\email{sola@math.su.se}
\date{\today}
\thanks{The first author was partially supported by the ``Wallenberg Mathematics Program 2018'', grant no. KAW 2017.0425, financed by the Knut and Alice Wallenberg Foundation.
The second author was partially supported by the Spanish Government grant MTM2016-75196-P}
\subjclass[2010]{42B25 (primary); 42B35, 46E30 (secondary).}
\keywords{Maximal function, Real Hardy spaces, Orlicz spaces.}
\begin{document}

\maketitle

\begin{abstract} We obtain versions of some classical results of Zygmund and Stein for functions belonging to the Hardy space $H^{\log} (\mathbb{R}^d)$ introduced by Bonami, Grellier, and Ky. We present further applications in the context of more general Orlicz spaces. This yields slight extensions of results previously obtained by Bonami-Madan, Iwaniec-Verde, and others.
 \end{abstract}

%%%%%%%%%%%%%%%%%%%%%%
%%%%%%%%%%%%%%%%%%%%%%
%%%%%%%%%%%%%%%%%%%%%%
%%%%%%%%%%%%%%%%%%%%%%
\section{Introduction}
The Hardy-Littlewood maximal function is a fundamental object in harmonic analysis, defined for a locally integrable function $f\colon \mathbb{R}^d\to \mathbb{C}$ by setting
$$ M (f) (x) := \sup_{r>0} \frac{1}{|B (x,r)|} \int_{B(x,r)} |f(y)| dy \quad  x \in \mathbb{R}^d,$$
where $B(x,r)$ denotes the open ball in $\mathbb{R}^d$ centered at $x$ with radius $r>0$ and $|A|$ denotes the Lebesgue measure of $A \subseteq \mathbb{R}^d$. It is a basic fact that the mapping $f\mapsto M(f)$ is bounded on $L^p(\mathbb{R}^d)$ for $1<p\leq \infty$. The maximal operator is also bounded from $L^1(\mathbb{R}^d)$ to weak-$L^1$, but does not map $L^1(\mathbb{R}^d)$ to itself (see, for instance, \cite{Big_Stein} for an in-depth discussion).

However, $M(f)$ is locally integrable provided $f$ is compactly supported and satisfies the $L\log L$ condition
\[\int_{\mathbb{R}^d}|f(x)|\log^+|f(x)|dx<\infty,\]
where, as usual, $\log^+|x|=\max\{\log|x|, 0\}$.
In a 1969 paper, E.M. Stein \cite{Stein_LlogL} proved that this $L\log L$ condition is both sufficient and necessary for integrability of the Hardy-Littlewood maximal function, in the following sense: if $f$ is supported in some finite ball $B=B(r)$ of radius $0<r<\infty$, then 
\[\int_{B}M(f) dx<\infty \quad \textrm{if, and only if,}\quad \int_{B}|f(x)|\log^+|f(x)|dx<\infty.\]

Another classical result that involves the space $L\log L$ is due to Zygmund, and asserts that the periodic Hilbert transform maps $L\log L(\mathbb{T})$ to $L^1(\mathbb{T})$; see e.g. Theorem 2.8 in Chapter VII of \cite{Zygmund_book}. This implies that $L\log L(\mathbb{T})$ is contained in the real Hardy space $H^1(\mathbb{T})$ consisting of integrable functions on the torus whose Hilbert transforms are integrable. Moreover, as shown by Stein in \cite{Stein_LlogL}, Zygmund's theorem has a partial converse, namely if $f\in H^1(\mathbb{T})$ and $f$ is non-negative, then $f$ necessarily belongs to $L\log L(\mathbb{T})$. Therefore, in view of the aforementioned results of Zygmund and Stein, the Hardy  space $H^1(\mathbb{T})$ is,  in  terms  of  magnitude, associated with the Orlicz space $L\log L(\mathbb{T})$. 

In this note, we obtain versions of these results for the Musielak-Orlicz Hardy space $H^{\log}(\mathbb{R}^d)$ that was recently introduced by A. Bonami, S. Grellier, and L.D. Ky in \cite{BGK} and further studied by Ky in \cite{Ky}. See also \cite{B_etal} and \cite{YYK}. To do this, we identify the correct analog of $L\log L$ in this context, which turns out to be $L\log \log L$:  given a measurable subset $B$ of $\mathbb{R}^d$, $L \log \log L (B)$ denotes the class of all locally integrable functions $f$ with $\mathrm{supp}(f) \subseteq B$ and
$$ \int_B |f(x)| \log^+ \log^+ |f(x)| dx < \infty.$$ 

In order to formally state our results, we now give the definition of the space $H^{\log}(\mathbb{R}^d)$. Let $\Psi \colon \mathbb{R}^d \times [0, \infty) \rightarrow [0, \infty)$ denote the function given by
\[\Psi (x,t) : = \frac{t}{ \log (e + t) + \log (e + |x|)}, \quad (x,t) \in \mathbb{R}^d \times [0, \infty ).\] 
If $B$ is a subset of $\mathbb{R}^d$, one defines $L_{\Psi} (B)$ to be the space of all locally integrable functions $f$ on $B$ satisfying 
$$ \int_B \Psi (x, |f(x)|) dx < \infty.$$

We shall also fix a non-negative function $\phi \in C^{\infty} (\mathbb{R}^d)$, which is supported in the unit ball of $ \mathbb{R}^d$ and has $\int_{\mathbb{R}^d} \phi (y) dy = 1$ and $\phi (x) = c_0$ for all $|x| \leq 1/2$, where $c_0$ is a constant. Given an $\epsilon > 0$, we use the standard notation $ \phi_{\epsilon} (x) : = \epsilon^{-d} \phi(\epsilon^{-1} x)$, $x \in \mathbb{R}^d$. 

%%%%
\begin{definition}[$H^{\log}$, see \cite{BGK, YYK}]
If $\phi$ is as above, consider the maximal function
$$ M_{\phi} (f) (x) := \sup_{\epsilon > 0} | (f \ast \phi_{\epsilon}) (x) |, \quad x \in \mathbb{R}^d.$$

The Hardy space $H^{\log} (\mathbb{R}^d)$ is defined to be the space of tempered distributions $f$ on $\mathbb{R}^d $ such that $ M_{\phi} (f) \in L_{\Psi} (\mathbb{R}^d)$, that is, $M_{\phi}(f)$ satisfies
$$ \int_{\mathbb{R}^d} \Psi (x, M_{\phi} (f) (x)) dx < \infty .$$ 
\end{definition}
The motivation for defining the space $H^{\log}$ comes from the study of products of functions in the real Hardy space $H^1(\mathbb{R}^d)$ and functions in $\mathrm{BMO}(\mathbb{R}^d)$, the class of functions of bounded mean oscillation. Following earlier work by Bonami, T. Iwaniec, P. Jones, and M. Zinsmeister in \cite{BIJZ}, it was shown by Bonami, Grellier, and Ky \cite{BGK} that the product $fg$, in the sense of distributions, of a function 
$f\in H^1(\mathbb{R}^d)$ and a function $g\in \mathrm{BMO}(\mathbb{R}^d)$ can be represented as a sum of a continuous bilinear mapping into $L^1(\mathbb{R}^d)$ and a
continuous bilinear operator into $H^{\log}(\mathbb{R}^d)$.

Here is our version of Stein's lemma for $L_{\Psi}$.  
\begin{thm}\label{Stein-type_lemma}
Let $f $ be a measurable function supported in a closed ball $B \subsetneq \mathbb{R}^d$.

Then $M(f) \in L_{\Psi} (B)$ if, and only if, $f \in L \log \log L (B)$.
\end{thm}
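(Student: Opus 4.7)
The plan is to reduce the stated equivalence to one involving a fixed function of $t$, and then to run matching distribution-function arguments in both directions. Since $B$ is a bounded ball, $\log(e+|x|)$ is uniformly bounded on $B$, so the weight $\Psi(x,t) = t/(\log(e+t)+\log(e+|x|))$ is comparable, with constants depending on $B$, to
\[
\Phi(t) := \frac{t}{\log(e+t)}.
\]
It therefore suffices to prove that $\int_B \Phi(M(f)(x))\, dx < \infty$ if and only if $\int_B |f(y)|\log^{+}\log^{+}|f(y)|\, dy<\infty$. Both directions rest on the layer-cake identity
\[
\int_B \Phi(M(f))\, dx = \int_0^\infty \Phi'(\lambda)\, |\{x\in B: M(f)(x)>\lambda\}|\, d\lambda,
\]
the asymptotic $\Phi'(\lambda)\asymp 1/\log(e+\lambda)$ for $\lambda$ large, and the elementary calculus identity $\int_a^b d\lambda/(\lambda\log\lambda) = \log\log b - \log\log a$, which together convert a factor $1/(\lambda\log\lambda)$ into an iterated logarithm after Fubini.

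For $(\Leftarrow)$, I would combine the classical weak-type $(1,1)$ bound $|\{M(f)>\lambda\}|\leq C_d \lambda^{-1}\int_{\{|f|>\lambda/2\}}|f|\,dy$ with the layer-cake identity, bounding the contribution from $\lambda \leq 1$ trivially by $|B|\Phi(1)$. Fubini then turns the remaining double integral into
\[
C\int_B |f(y)|\int_1^{2|f(y)|}\frac{d\lambda}{\lambda\log(e+\lambda)}\, dy \lesssim \int_B |f(y)|\log\log(e+|f(y)|)\, dy,
\]
which is finite by hypothesis.

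For $(\Rightarrow)$, I first note that the pointwise lower bound $M(f)(x)\geq c_d\|f\|_{L^1(B)}/|B|$ for every $x\in B$ forces $f\in L^1(B)$ (otherwise $\int_B \Psi(x,M(f))\,dx=\infty$). One then needs a Stein-type reverse weak-type inequality
\[
|\{x\in B: M(f)(x)>c_1\lambda\}|\geq \frac{c_2}{\lambda}\int_{\{|f|>\lambda\}}|f(y)|\,dy \quad (\lambda\geq \lambda_0),
\]
with $\lambda_0$ depending only on $\|f\|_{L^1(B)}$ and $|B|$ and, crucially, with the set on the left-hand side contained in $B$ itself. I would establish this by a Vitali stopping-time argument: for each $y\in\{|f|>\lambda\}$ let $r(y)$ be the largest radius for which the mean of $|f|$ over $B(y,r(y))$ equals $\lambda/2$; extract a disjoint subfamily $\{B_i=B(y_i,r_i)\}$ whose fivefold dilates still cover $\{|f|>\lambda\}$ a.e.; observe that $M(f)\geq 2^{-(d+1)}\lambda$ on each $B_i$; and invoke the elementary geometric estimate $|B_i\cap B|\gtrsim_d |B_i|$, which holds because each $y_i\in B$ and the radii $r_i$ are controlled by $\|f\|_{L^1(B)}/|B|$. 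Substituting into the layer-cake identity and using Fubini with the same calculus identity yields the matching lower bound
\[
\int_B\Phi(M(f))\,dx\gtrsim \int_B |f|\log\log(e+|f|)\,dy - C_B.
\]

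I expect the principal technical obstacle to be producing the reverse weak-type estimate on $B$ itself: a standard Calder\'on--Zygmund decomposition yields such a bound only on a slight dilation $cB$ of $B$ (since the stopping-time cubes may extend beyond $\partial B$), and transferring back to $B$ would require controlling $M(f)$ on $cB\setminus B$, which is not readily available from the $L_\Psi$-integrability hypothesis alone. The Vitali construction above sidesteps this by guaranteeing that the selected balls are always centered at points of $B$, reducing the issue to the purely geometric lower bound $|B_i\cap B|\gtrsim_d |B_i|$.
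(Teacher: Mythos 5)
Your proposal is correct. The sufficiency direction coincides with the paper's argument: on the bounded ball $B$ one has $\Psi(x,t)\sim t/\log(e+t)$, and the weak-type $(1,1)$ bound \eqref{wt_sv} combined with the layer-cake formula and Fubini produces exactly the iterated-logarithm integral. For the necessity direction, however, you take a genuinely different route. The paper follows Stein's original scheme: it uses the reverse weak-type inequality \eqref{wt_reverse} with the superlevel set living in a large dilate $\rho B$, and must therefore upgrade the hypothesis $M(f)\in L_{\Psi}(B)$ to $M(f)\in L_{\Psi}(\rho B)$; this is the technical core of the paper's proof, carried out via the decay estimate \eqref{away} on $\rho B\setminus 2B$ together with the inversion inequality \eqref{pw_ineq_M}, which controls $M(f)$ on $2B\setminus B$ by its values on $B$ after a polar-coordinates change of variables. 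You instead localize the reverse weak-type inequality to $B$ itself by a Vitali stopping-radius construction: since the selected balls are centered at points of $\{|f|>\lambda\}\subseteq B$ and have radii $r_i\lesssim (\|f\|_{L^1(B)}/\lambda)^{1/d}$, which is at most the radius of $B$ once $\lambda\geq\lambda_0(\|f\|_{L^1(B)},|B|)$, the geometric bound $|B_i\cap B|\gtrsim_d |B_i|$ applies and no dilate of $B$ ever enters. Both arguments are complete; yours avoids the inversion computation entirely, at the price of re-proving the reverse weak-type estimate rather than quoting Stein's inequality (6) as a black box, and your preliminary remark that $M(f)\gtrsim \|f\|_{L^1(B)}/|B|$ on $B$ forces $f\in L^1(B)$ makes explicit an integrability reduction that the paper states only as a standing assumption in that part of the proof.
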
 
Our proof in fact leads to a more general version of Theorem \ref{Stein-type_lemma}. We discuss this, and give a proof of Theorem \ref{Stein-type_lemma} in Section \ref{steinproof}.

Next is the analog of Zygmund's result for $H^{\log}(\mathbb{R}^d)$.
\begin{theo}\label{LlogL} Let $B$ denote the closed unit ball in $\mathbb{R}^d$.

If $f$ is a measurable function satisfying $f \in L \log \log L (B)$ and $\int_{B} f (y) dy = 0$, then $f \in H^{\log}(\mathbb{R}^d)$.
\end{theo}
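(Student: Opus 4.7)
The plan is to bound $\int_{\mathbb{R}^d}\Psi(x, M_\phi(f)(x))\, dx$ by splitting the domain of integration as $\mathbb{R}^d = B^* \cup (\mathbb{R}^d\setminus B^*)$, where $B^*:= B(0,2)$, and handling the two regions by very different techniques: the interior region will be controlled by an appeal to Theorem \ref{Stein-type_lemma}, while the exterior will be tamed by exploiting the cancellation hypothesis $\int_B f\, dy = 0$ together with the smoothness of the test function $\phi$.

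On $B^*$, the weight in $\Psi$ is essentially harmless, since $\log(e+|x|)\leq \log(e+2)$, so that $\Psi(x,t)\lesssim t/\log(e+t)$ uniformly in $x\in B^*$. Because $\phi$ is a fixed non-negative smooth function with compact support, the standard pointwise estimate $M_\phi(f)(x)\leq C_\phi M(f)(x)$ holds, and since $\Phi(t):=t/\log(e+t)$ satisfies $\Phi(Ct)\leq C\Phi(t)$ for $C\geq 1$, it suffices to bound $\int_{B^*}\Psi(x,M(f)(x))\, dx$. I would then apply Theorem \ref{Stein-type_lemma} with the ambient ball taken to be the closed ball $\overline{B(0,2)}$: since $f$ is supported in the closed unit ball, hence a fortiori in $\overline{B(0,2)}$, and since $f\in L\log\log L(\overline{B(0,2)})$ by hypothesis, we obtain $M(f)\in L_{\Psi}(\overline{B(0,2)})$, as required.

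On the exterior $\{|x|>2\}$, the convolution $(f\ast \phi_\epsilon)(x)$ vanishes unless $\mathrm{supp}(\phi_\epsilon(x-\cdot))\cap B\neq \emptyset$, which forces $\epsilon\geq |x|-1\geq |x|/2$. Invoking cancellation,
\[
(f\ast \phi_\epsilon)(x)=\int_{B}f(y)\bigl[\phi_\epsilon(x-y)-\phi_\epsilon(x)\bigr]\, dy,
\]
and the mean value theorem yields $|\phi_\epsilon(x-y)-\phi_\epsilon(x)|\leq \epsilon^{-d-1}\|\nabla\phi\|_\infty |y|\lesssim \epsilon^{-d-1}$ for $y\in B$. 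Combining with $\epsilon\gtrsim |x|$, we get the pointwise decay $M_\phi(f)(x)\lesssim \|f\|_{L^1(B)}|x|^{-d-1}$, whence
\[
\int_{|x|>2}\Psi(x,M_\phi(f)(x))\, dx \lesssim \int_{|x|>2}\frac{|x|^{-d-1}}{\log(e+|x|)}\, dx < \infty
\]
by polar coordinates.

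I expect the main conceptual step to be the interior argument, where Theorem \ref{Stein-type_lemma} must be invoked on the \emph{enlarged} ball $\overline{B(0,2)}$ rather than on the original unit ball of support. This buffer is what allows the far-field decay to take over cleanly beyond radius $2$. Once this adjustment is in place, the cancellation-driven exterior estimate is a routine smoothness-and-moment computation, and the two pieces combine to give $M_\phi(f)\in L_{\Psi}(\mathbb{R}^d)$, that is, $f\in H^{\log}(\mathbb{R}^d)$.
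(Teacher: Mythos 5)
Your proposal is correct and follows essentially the same route as the paper's own proof: both split $\mathbb{R}^d$ at the ball of radius $2$, control the interior via $M_\phi(f)\lesssim M(f)$ together with Theorem \ref{Stein-type_lemma} applied on the enlarged ball, and control the exterior via the vanishing moment plus a Lipschitz estimate on $\phi_\epsilon$, yielding the decay $M_\phi(f)(x)\lesssim |x|^{-d-1}$. The only cosmetic difference is that the paper routes the comparison $\Psi(x,M_\phi(f))\lesssim \Psi(x,M(f))$ through Lemma \ref{decreasing}, whereas you use the doubling of $t\mapsto t/\log(e+t)$ directly.
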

We remark that the mean-zero condition in the hypothesis is in fact necessary in order to place a compactly supported function in $H^{\log}$.
The proof of Theorem \ref{LlogL} is presented in Section 3. 

In Section 4, we discuss further extensions to the periodic setting.

\begin{rmk}
After posting a first version of this note, the authors were informed that our main results can be derived from results previously obtained  in the setting of Orlicz spaces; see for instance \cite{BM,IV}. We are grateful for having been directed to the appropriate sources. In this note, we give a self-contained account, including a discussion of sharpness, and indicate some minor modifications that need to be made to obtain results in the Musielak-Orlicz setting. 
\end{rmk}
%%%%

%%%%%%%%%%%%%%%%%%%%%%
%%%%%%%%%%%%%%%%%%%%%%
%%%%%%%%%%%%%%%%%%%%%%
%%%%%%%%%%%%%%%%%%%%%%
\section{Proof of the Stein-type Theorem for $L_{\Psi}$ and further extensions}\label{steinproof}

We begin with an elementary observation that will be implicitly used several times in the sequel: if $\Phi : [0, \infty) \rightarrow [0, \infty)$ is an increasing function, then for every positive constant $\alpha_0$ one has 
$$ \int_B \Phi (|g (x)| ) dx \leq \Phi (\alpha_0) |B| + \int_{\{ |g| > \alpha_0 \}} \Phi ( |g(x)| ) dx $$ 
for each measurable set $B$ in $\mathbb{R}^d$ with finite measure.

We now turn to the proof of our first theorem. 
\begin{proof}[Proof of Theorem \ref{Stein-type_lemma}]
Assume first that $f \in L \log \log L (B)$. The main observation is that locally the space $L_{\Psi}$ essentially coincides with the Orlicz space defined in terms of the function $\Psi_0 (t) : = t \cdot [\log (e + t)]^{-1}$, $t \geq 0$ and so, one can employ the arguments of Stein \cite{Stein_LlogL}. In view of this observation, we remark that the fact that $f \in L \log \log L (B)$ implies $M (f ) \in L_{\Psi_0} (B)$ is well-known; see for instance \cite[p.242]{BM}, \cite[Sections 4 and 7]{IV}. We shall also include the proof of this implication here for the convenience of the reader.

To be more precise, we note that for $x\in B$ one has
\begin{equation}\label{eq:equivalence}
    \log(e+M(f)(x))\leq \log \brkt{(e+|x|)(e+M(f)(x))}  
\leq c\log(e+M(f)(x)),
\end{equation} 
for a constant $c$ that only depends on $B$.  
Next, an integration by parts yields
\[\int_{e}^{y} \frac{1}{\log \alpha}d\alpha=\frac{y}{\log y}-e+\int_{e}^{y}\frac{1}{\log^2 \alpha}d\alpha,\]
so that
\[\frac{y}{\log y}\leq e +\int_{e}^{y}\frac{1}{\log \alpha}d\alpha, \quad \textrm{for} \quad y>e.\]
Together, these two observations imply that
\begin{align*}
 \int_B \Psi (x, M(f)(x) ) dx &\lesssim_B 1 + \int_{ B \cap \{ M(f) > e \} } \Bigg( \int_e^{M(f)(x)} \frac{1}{ \log \alpha } d \alpha \Bigg) dx \\
 &=1 +  \int_e^{\infty} \frac{1}{ \log \alpha } \cdot | \{ x \in B : M (f)(x) > \alpha \} | d \alpha.
\end{align*}
To estimate the last integral, note that there exists an absolute constant $C_d >0$ such that
\begin{equation}\label{wt_sv}
|\{ x \in \mathbb{R}^d : M (f)(x) > \alpha \}| \leq \frac{C_d}{\alpha} \int_{\{ |f| > \alpha/2 \}} |f(x)| dx  
\end{equation}
 for all $\alpha > 0$; see e.g. \cite[(5)]{Stein_LlogL} or Section 5.2 (a) in Chapter I in \cite{Singular}. We thus deduce from \eqref{wt_sv} that
\begin{align*}
\int_B \Psi (x, M(f)(x)) dx &\lesssim_B 1 +  \int_B |f(x)| \cdot \Bigg( \int_e^{2|f(x)|} \frac{1}{\alpha \log \alpha} d \alpha \Bigg) dx \\
&\lesssim 1 + \int_{B} |f(x)| \log^+ \log^+ |f(x)| dx,  
 \end{align*}
which implies that $ M (f) \in L_{\Psi} (B)$. 

To prove the reverse implication, assume that for some $f $ supported in $B$ with $f \in L^1 (B)$ we have $M(f) \in L_{\Psi} (B)$. Our task is to show that $f \in L \log \log L (B)$. In order to accomplish this, we shall make use of the fact that there exists a $\rho > 2$, depending only on $\| f \|_{L^1 (B)}$ and $B$, such that we also have $M (f) \in L_{\Psi} (\rho B)$ and moreover, for every $\alpha \geq e^e$,
\begin{equation}\label{wt_reverse}
| \{ x \in \rho B : M (f) (x) > c_1 \cdot \alpha \} | \geq \frac{c_2}{\alpha} \int_{B \cap \{ |f| > \alpha \}} |f(x)| dx, 
\end{equation}
where $c_1$, $c_2$ are positive constants that can be taken to be independent of $f$ and $\alpha$. Indeed, arguing as in the proof of \cite[Lemma 1]{Stein_LlogL}, note that for every $r > 2$ one has
\begin{equation}\label{away}
 M (f) (x) \lesssim \frac{1}{(r - 1)^d |B|} \cdot \| f \|_{L^1 (B)}  \quad \mathrm{for} \ \mathrm{all} \ x \in \mathbb{R}^d \setminus r B.
\end{equation}
Hence, if we choose $\rho >2 $ to be large enough, then $ M (f) (x) < e^e \leq \alpha$ for all $x \in \mathbb{R}^d \setminus \rho B$ and so, \eqref{wt_reverse}  follows from \cite[Inequality (6)]{Stein_LlogL}.

Furthermore, one can check that $M(f) \in L_{\Psi} (\rho B)$. Indeed, if we write $B = B (x_0, r_0)$ then, as in \cite{Stein_LlogL}, it follows from the definition of $M$ and the fact that $\mathrm{supp}(f) \subseteq B$ that there exists a constant $c_0>0$, depending only on the dimension, such that for every $ x \in 2B \setminus B$ one has 
\begin{equation}\label{pw_ineq_M}
M (f) (x ) \leq c_0 \cdot M (f) \left( x_0 + r_0^2 \cdot \frac{ x-x_0 } { | x - x_0 |^2} \right)
\end{equation}
and so, $M (f) \in L_{\Psi} (2 B)$. To show that \eqref{pw_ineq_M} implies that $M(f) \in L_{\Psi} (B)$, observe first that the function $\Psi_0 (s)=s/\log (e+s)$ is increasing on $[0,+\infty)$, and for all $t\geq 1$ and all $s>0$, 
\[
    1\geq \frac{\log(e+s)}{\log(e+ts)}=\frac{\log(e+s)}{\log(e/t+s)+\log t}\geq 
\frac{\log(e+s)}{\log(e+s)+\log t}\geq 
\frac{1}{1+\log t},
\]
so $\Psi_0$ satisfies
\begin{equation}\label{eq:doubling}
        t(1+\log t)^{-1} \Psi_0 (s)\leq \Psi_0 (st)\leq t\Psi_0 (s),
\end{equation} 
which implies that for all $c>0$ and all $s>0$ 
\[
    \Psi_0 (cs)\sim_c \Psi_0 (s).
\]

Observe that 
a change to polar coordinates, followed by another a change of variables and elementary estimates yield
\[
\begin{split}
        \int_{2B\setminus B} \Psi_0 (Mf(x))\dd x 
&\lesssim \int_{r_0}^{2r_0} s^{d-1}\int_{S^{d-1}} \Psi_0 (Mf(x_0+r_0^2 \theta/s))\dd \sigma(\theta)\dd s\\
&\sim r_0^{-1}\int_{\frac{1}{2}}^1  t^{-1-d}\int_{S^{d-1}} \Psi_0 (Mf(x_0+r_0t \theta))\dd \sigma(\theta)\dd t\\
&\sim \int_{\frac{1}{2}}^1  t^{d-1}\int_{S^{d-1}} \Psi_0 (Mf(x_0+r_0t \theta))\dd \sigma(\theta)\dd t \\
& \lesssim \int_B \Psi(x, Mf(x))\dd x.
\end{split}
\]

Moreover, we deduce from \eqref{away} that $M(f)$ belongs to $ L_{\Psi} (\rho B \setminus 2B)$ and it thus follows that $M (f) \in L_{\Psi} (\rho B)$, as desired. 

Next, note that by the same reasoning as in the proof of sufficiency and by Fubini's theorem,
\begin{align*}
\int_{\rho B} \Psi (x, M (f) (x)) dx &\gtrsim \int_{\rho B \cap  \{ M(f) > \max \{e^e, |x_0|+ r_0 \}  \} } \frac{M(f)(x)}{\log (M (f)(x))} dx \\
& \gtrsim \int_{\rho B \cap\{ M(f) > \max \{e^e, |x_0|+ r_0 \} \} } \Bigg( \int_{e^e}^{M(f)(x)} \frac{1}{ \log \alpha} d \alpha \Bigg) dx \\
& \gtrsim \int_{\max \{e^e, |x_0|+ r_0 \}}^{\infty} \frac{1}{\log \alpha} \cdot | \{x \in \rho B : M(f)(x) > c_2 \cdot \alpha \} | d \alpha  .
\end{align*}
By using \eqref{wt_reverse}, we now get
\begin{align*}
\infty  > \int_{\rho B} \Psi (x, M(f)(x)) dx & \gtrsim \int_B |f(x)| \cdot \Bigg( \int_{\max \{e^e, |x_0|+ r_0 \}}^{|f(x)|} \frac{1}{\alpha \log \alpha} d \alpha \Bigg) dx \\
&\gtrsim 1 + \int_B |f(x)| \log^+ \log^+ |f(x)| dx 
\end{align*}
and this completes the proof of Theorem \ref{Stein-type_lemma}.
\end{proof}

\begin{rmk} Let $B_0$ denote the closed unit ball in $\mathbb{R}^d$. Given a small $\delta \in (0,e^{-e})$, if, as on pp. 58--59 in \cite{Fefferman}, one considers $f := \delta^{-d} \chi_{\{ |x| < \delta \} }$ then $M (f) (x) \sim |x|^{-d}$ for all $|x| > 2 \delta$ and so,
\begin{equation}\label{equivalence_ex} 
\int_{B_0} |f(x)| \log^+ \log^+ |f (x)| dx \sim \log (\log (\delta^{-1}) ) \sim \int_{B_0} \Psi (x, M(f) (x)) dx .
\end{equation}
This shows that given $L_{\Psi} (B_0)$, the space $L \log \log L (B_0)$ in the statement of Theorem \ref{Stein-type_lemma} is best possible in general, in terms of size. 

Indeed, the left-hand side of \eqref{equivalence_ex} follows by direct calculation. On the other hand, using \eqref{eq:equivalence}, \eqref{eq:doubling}, a change to polar coordinates, and further change of variables yield
\[
   \begin{split}
       \int_{B_0} \Psi (x, M(f) (x)) dx &\sim 1+\int_{2\delta}^{1} \frac{1}{\log(e+s^{-d})}\frac{\dd s}{s}\\
&\sim 1+\int_{1}^{(2\delta)^{-1}} \frac{1}{\log(e+u^d)}\frac{\dd u}{u}\sim 1+\int_{e}^{(2\delta)^{-1}} \frac{1}{\log(u)}\frac{\dd u}{u},
   \end{split}
\]
from where the right-hand side of \eqref{equivalence_ex} follows.
\end{rmk}

%%%%%%%%%%%%%%% 
%%%%%%%%%%%%%%% 
\subsection{Further generalizations} Assume that $\Psi : \mathbb{R}^d \times [0, \infty)$ is a non-negative function satisfying the following properties:
\begin{enumerate}
\item For every $x \in \mathbb{R}^d$ fixed, $\Psi (x,t) = \Psi_x (t)$ is \emph{Orlicz} in $t \in [0,\infty)$, namely $\Psi_x (0) = 0$, $\Psi_x$ is increasing on $[0,\infty)$ with $\Psi_x (t) > 0$ for all $t >0$ and $\Psi_x (t) \rightarrow \infty$ as $t \rightarrow \infty$. 

Moreover, assume that there exists an absolute constant $C_0 > 0$ such that $\Psi_x (2t) \leq C_0 \Psi_x (t) $ for all $x \in \mathbb{R}^d$ and every $t \in [0, \infty) $. 
\item If $K$ is a compact set in $\mathbb{R}^d$, then there exist $x_1, x_2 \in K$ and a constant $C_K > 0$ such that
$$ C_K^{-1} < \Psi (x_1, t) \leq \Psi (x,t) \leq \Psi (x_2,t)  < C_K   $$
for every $x \in K$ and for all $t >0$. 
\item If we write $\Psi (x,t) = \Psi_x (t) = \int_0^t \psi_x (s) ds$, then for every $\alpha_0$, $\beta_0$ with $0 < \alpha_0 < \beta_0 $ one has
$$ \int_{\alpha_0}^{\beta_0} \frac{\psi_x (s)}{s} ds < \infty$$
for every $x \in \mathbb{R}^d$. 
\end{enumerate}

By carefully examining the proof of Theorem \ref{Stein-type_lemma}, one obtains the following result.

\begin{theo} \label{generalstein}

Let $\Psi (x,t) = \int_0^t \psi_x (s) ds$, $(x,t) \in \mathbb{R}^d \times [0, \infty)$, be as above.

Fix a closed ball $B$ with $B \subsetneq \mathbb{R}^d$ and let $f$ be such that $\mathrm{supp} (f) \subseteq B$.  Then, $M (f ) \in L_{\Psi} (B)$ if, and only if, 
$$ \int_{\{ | f| > \alpha_0 \} } | f(x) | \cdot \Bigg( \int_{\alpha_0}^{|f(x)|} \frac{\psi_x (s)}{s} ds \Bigg) dx < \infty $$
for every $\alpha_0 > 0$.
\end{theo}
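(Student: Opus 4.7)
The plan is to imitate the proof of Theorem \ref{Stein-type_lemma} closely, replacing the specific function $\Psi_0(t) = t/\log(e+t)$ by the generic $\Psi_x$ and using properties (1)--(3) at every step. The decisive preliminary is to remove the $x$-dependence of $\psi_x$ uniformly on the compact ball $B$: fix $x_0 \in B$ and set $\phi := \psi_{x_0}$, $\Phi := \Psi(x_0,\cdot)$. Property (2) gives $\Psi_x \sim_B \Phi$ uniformly in $t$, and combined with the standard fact that Orlicz functions satisfying $\Delta_2$ obey $\Psi_x(t) \sim t\,\psi_x(t)$, this forces the pointwise-in-$s$ comparison $\psi_x(s) \sim_B \phi(s)$ uniformly for $x \in B$. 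Throughout the rest of the proof, $\psi_x$ can then be interchanged with $\phi$ at the price of constants depending on $B$.

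For the direction $(\Leftarrow)$, assume the stated integral is finite for every $\alpha_0 > 0$. Starting from $\int_B \Psi(x, M(f)(x))\,dx$ and using the splitting from the beginning of Section \ref{steinproof} together with $\Psi_x(t) \leq \Psi_x(\alpha_0) + \int_{\alpha_0}^t \psi_x(s)\,ds$ reduces matters to controlling $\int_{B \cap \{M(f) > \alpha_0\}} \int_{\alpha_0}^{M(f)(x)} \psi_x(s)\,ds\,dx$. Swapping the order of integration via Fubini and using $\psi_x \sim_B \phi$ converts this to a constant times $\int_{\alpha_0}^\infty \phi(s)\,|\{x \in B : M(f)(x) > s\}|\,ds$. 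The weak-type bound \eqref{wt_sv} and one more application of Fubini yield an integral of the form $\int_B |f(y)| \int_{\alpha_0}^{2|f(y)|} \phi(s)/s\,ds\,dy$; using $\phi \sim_B \psi_y$ and the $\Delta_2$ condition to shrink the upper limit from $2|f|$ to $|f|$ at the cost of an additive constant gives the hypothesis integral, which is finite by assumption.

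For $(\Rightarrow)$, the reverse direction of the proof of Theorem \ref{Stein-type_lemma} applies with cosmetic changes. Estimates \eqref{away} and \eqref{pw_ineq_M}, combined with property (2), yield some $\rho > 2$ depending only on $\|f\|_{L^1(B)}$ and $B$ such that $M(f) \in L_\Psi(\rho B)$, and \eqref{wt_reverse} supplies a reverse distributional bound for $\alpha \geq e^e$. Bounding $\int_{\rho B} \Psi(x, M(f))\,dx$ from below by $\int_{\rho B \cap \{M(f) > A\}} \int_A^{M(f)(x)} \psi_x(s)\,ds\,dx$ for a suitable threshold $A$, applying Fubini, the reverse weak-type bound, the pointwise comparability $\psi_x \sim_B \phi$, and then Fubini once more, produces a lower bound of the form $c \int_{\{|f| > A'\}} |f(y)| \int_{A'}^{|f(y)|} \psi_y(s)/s\,ds\,dy$ for some $A' > 0$. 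Finiteness of the left-hand side then delivers the hypothesis integral for $\alpha_0 = A'$; property (3) together with $f \in L^1(B)$ propagates the finiteness to every $\alpha_0 > 0$.

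The main obstacle in running this plan is the uniform-in-$s$ comparison $\psi_x(s) \sim_B \phi(s)$, which property (2) does not deliver directly but which follows once the $\Delta_2$ condition is invoked via $\Psi_x(t) \sim t\,\psi_x(t)$. After this reduction, the computations are a close translation of those in the proof of Theorem \ref{Stein-type_lemma}, with $\phi(s)/s$ playing the role previously taken by $1/(s \log s)$, and property (3) being precisely what is required to guarantee that the hypothesis integral is independent of the choice of $\alpha_0$.
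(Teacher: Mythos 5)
The paper gives no separate, written-out proof of Theorem \ref{generalstein}: it is stated as following from a careful reexamination of the proof of Theorem \ref{Stein-type_lemma}, which is exactly the plan you adopt, and the skeleton of your argument (splitting at a threshold, writing $\Psi_x(t)=\Psi_x(\alpha_0)+\int_{\alpha_0}^{t}\psi_x(s)\,ds$, Fubini, the weak-type bound \eqref{wt_sv} in one direction, and \eqref{away}, \eqref{pw_ineq_M}, \eqref{wt_reverse} in the other) is the right one. The genuine gap is your ``decisive preliminary''. You claim that the uniform comparability $\Psi_x\sim_B\Psi_{x_0}$ supplied by property (2), combined with ``the standard fact that Orlicz functions satisfying $\Delta_2$ obey $\Psi_x(t)\sim t\,\psi_x(t)$'', forces the pointwise comparison $\psi_x(s)\sim_B\psi_{x_0}(s)$. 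That standard fact is a statement about \emph{convex} Orlicz functions, where $\psi_x$ is nondecreasing and one writes $t\psi_x(t)\leq\Psi_x(2t)\leq C_0\Psi_x(t)$. The hypotheses preceding Theorem \ref{generalstein} only require $\Psi_x$ to be increasing and doubling; they impose no monotonicity on the density $\psi_x$, and the model function $\Psi_0(t)=t/\log(e+t)$ has a \emph{decreasing} derivative, so convexity cannot be assumed implicitly. Under the stated hypotheses one can have $\Psi_x\sim\Psi_{x_0}$ uniformly while $\psi_x(s)$ and $\psi_{x_0}(s)$ are incomparable at individual points $s$ (add to $\psi_{x_0}$ tall, thin bumps of negligible total integral): comparability of antiderivatives never controls derivatives pointwise. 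Since every Fubini and weak-type step in your argument passes through this unproved comparison, the proof as written does not close.

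The gap is reparable, and the repair is already visible in the proof of Theorem \ref{Stein-type_lemma}: the kernel $1/\log\alpha$ used there is, up to constants, $\Psi_0(\alpha)/\alpha$ rather than $\psi_0(\alpha)$, and it enters through an integration by parts identity rather than through the fundamental theorem of calculus. In the general setting one should likewise write
\[
\int_{\alpha_0}^{t}\frac{\psi_x(s)}{s}\,ds=\frac{\Psi_x(t)}{t}-\frac{\Psi_x(\alpha_0)}{\alpha_0}+\int_{\alpha_0}^{t}\frac{\Psi_x(s)}{s^{2}}\,ds
\]
and run the whole argument with the kernel $\Psi_x(s)/s^{2}$, which property (2) \emph{does} uniformize over the compact sets $B$ and $\rho B$; the boundary terms are absorbed using the doubling of $\Psi_x$ (note $\Psi_x(t)/t\leq 2\int_{t}^{2t}\Psi_x(s)s^{-2}\,ds$ because $\Psi_x$ is increasing). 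This reformulation also disposes of your step of shrinking the upper limit from $2|f|$ to $|f|$, which as you state it would need a doubling condition on $\psi_x$ itself, again not assumed. The remaining steps of your proposal, including the use of property (3) and $f\in L^1(B)$ to make the conclusion independent of the choice of $\alpha_0$, are sound.
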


Theorem \ref{generalstein} applies to certain Orlicz spaces considered in connection with convergence of Fourier series, see e.g. \cite{Antonov, Sjolin}, and the recent paper by V. Lie \cite{Lie}; we give some sample applications in Subsection \ref{applications_periodic}.

%%%%%%%%%%%%%%%%%%%%%%
%%%%%%%%%%%%%%%%%%%%%%
%%%%%%%%%%%%%%%%%%%%%%
%%%%%%%%%%%%%%%%%%%%%%
\section{Proof of the Zygmund-type Theorem for $H^{\log} (\mathbb{R}^d)$}
We begin with the following elementary lemmas.

%%%%
\begin{lemma}\label{decreasing} Consider the function $g : [0, \infty )^2 \rightarrow [0, \infty)$ given by
$$ g (s,t) : = \frac{1}{ \log (e + t) + \log (e + s)}, \quad (s,t) \in [0, \infty )^2. $$
Then one has
$$ \Psi (x,t) \leq \int_0^t g(|x| , \tau) d \tau \leq 2 \Psi (x,t) $$ 
for all $(x,t) \in \mathbb{R}^d \times [0, \infty)$.
\end{lemma}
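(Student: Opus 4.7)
The plan is to reduce everything to a one-variable statement: fix $x \in \mathbb{R}^d$ and set $a := \log(e+|x|) \geq 1$, so that
$h(\tau) := g(|x|,\tau) = \bigl(\log(e+\tau) + a\bigr)^{-1}$
is a smooth, positive, decreasing function of $\tau \geq 0$ with $\Psi(x,t) = t\,h(t)$. The entire lemma then amounts to showing
\[
t\,h(t) \;\leq\; \int_0^t h(\tau)\,d\tau \;\leq\; 2t\,h(t).
\]

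The lower bound is immediate: since $h$ is decreasing on $[0,t]$, the mean-value inequality gives $\int_0^t h(\tau)\,d\tau \geq t\,h(t)$, i.e.\ the left-hand inequality in the lemma.

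For the upper bound, my approach would be integration by parts. Direct differentiation yields $h'(\tau) = -(e+\tau)^{-1} h(\tau)^2$, so
\[
\int_0^t h(\tau)\,d\tau \;=\; \bigl[\tau\,h(\tau)\bigr]_0^{t} - \int_0^t \tau h'(\tau)\,d\tau \;=\; t\,h(t) + \int_0^t \frac{\tau}{e+\tau}\,h(\tau)^2\,d\tau.
\]
The remainder term needs to be controlled by $t\,h(t)$. Here one uses two trivial but crucial bounds: $\tau/(e+\tau) \leq 1$ for $\tau \geq 0$, and $h(\tau) \leq h(0) = (1+a)^{-1}$. Because $a = \log(e+|x|) \geq 1$, the second bound is at most $1/2$. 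Hence
\[
\int_0^t \frac{\tau}{e+\tau}\,h(\tau)^2\,d\tau \;\leq\; \frac{1}{1+a}\int_0^t h(\tau)\,d\tau \;\leq\; \frac{1}{2}\int_0^t h(\tau)\,d\tau,
\]
and rearranging the resulting identity gives $\int_0^t h(\tau)\,d\tau \leq 2\,t\,h(t) = 2\Psi(x,t)$, which is the right-hand inequality.

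There is no real obstacle here: the only thing to be careful about is extracting the factor $2$ sharply, which is why it is essential that $a \geq 1$ (equivalently, that the $1$ sitting inside $\log(e+|x|)$ is in fact $\geq 1$ for every $x \in \mathbb{R}^d$). The same calculation would not give a uniform constant if $\log(e+|x|)$ were replaced by an expression that could become arbitrarily small, so this boundedness away from $0$ of $a$ is what makes the constant $2$ work for all $x$ simultaneously.
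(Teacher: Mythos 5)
Your proof is correct, and the upper bound is obtained by a genuinely different mechanism than in the paper. The paper proves the right-hand inequality by checking that $s\mapsto s^{1/2}g(|x|,s)$ is increasing (via a sign analysis of $\partial_t(t^{\epsilon}g(|x|,t))$ at $\epsilon=1/2$), and then writing $\int_0^t g(|x|,s)\,ds=\int_0^t s^{-1/2}\,s^{1/2}g(|x|,s)\,ds\le t^{1/2}g(|x|,t)\int_0^t s^{-1/2}\,ds=2\Psi(x,t)$. You instead integrate by parts, identify the error term $\int_0^t \frac{\tau}{e+\tau}h(\tau)^2\,d\tau$ exactly, and absorb it using $h(\tau)\le h(0)=(1+a)^{-1}\le \tfrac12$; the rearrangement is legitimate since $\int_0^t h$ is finite. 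Both arguments ultimately exploit the same structural fact, namely that $\log(e+\tau)+\log(e+|x|)\ge 2$ uniformly in $x$ and $\tau$ (the paper uses it to show $(e+t)(\log(e+t)+\log(e+|x|))\ge 2(e+t)>2t$, you use it to make the absorption constant $\tfrac12$), so neither is more general; yours is arguably more transparent about where the constant $2$ comes from, since the remainder term is computed exactly rather than estimated through a choice of exponent, while the paper's version avoids the (mild) self-referential absorption step. One small caveat: your closing remark that the calculation would fail if $\log(e+|x|)$ could become small applies only to your specific absorption step, not to the truth of an inequality $\int_0^t h\le C\,t\,h(t)$ with some uniform $C$, which would survive with a worse constant; but this does not affect the validity of the proof.
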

\begin{proof}
	The function $t\mapsto g(s,t)=\frac{1}{\log \brkt{(e+t)(e+s)}}$ is decreasing, so clearly 
	\[
	\int_0^t g(|x|,s)\dd s\geq tg(|x|,t)=\Psi(x,t).
	\] 
	We now address the upper bound. A calculation yields that
	\[
	\partial_t (t^\epsilon g(|x|,t))=\frac{t^{\epsilon}}{\log (e+t)+\log(1+\abs{x})}\brkt{\frac{\epsilon}{t}-\frac{1}{(e+t)(\log(e+t)+\log(e+\abs{x})}},
	\]
	and we observe that the term within the parenthesis is positive if, and only if,
	\[
	\frac{\epsilon}{t}-\frac{1}{(e+t)(\log(e+t)+\log(e+\abs{x})}>0,
	\]
	which for $\epsilon=\frac{1}{2}$ is equivalent to the inequality
	\[
	(e+t)(\log(e+t)+\log(e+\abs{x}))>2t.
	\]
	But clearly 
	\[
	(e+t)(\log(e+t)+\log(e+\abs{x}))\geq 2(e+t)>2t.
	\]
	Thus $s\mapsto s^\epsilon g(|x|,s)$ is increasing for $\epsilon =1/2$, which implies that 
	\[
	\int_0^t g(|x|,s)\dd s=\int_0^t s^{-\epsilon} s^{\epsilon}g(|x|,s)\dd s\leq \frac{1}{1-\epsilon}\Psi(x,t)=2\Psi(x,t)
	\]
and this completes the proof of the lemma.
\end{proof}
%%%%

\begin{lemma}\label{translation}
Let $x_0 \in \mathbb{R}^d$ be fixed and for $u \in S(\mathbb{R}^d)$ define $\langle \tau_{x_0}f , u \rangle := \langle f, \tau_{-x_0}u \rangle$, where $\tau_{-x_0} u (x)  := u (x-x_0)$, $x \in \mathbb{R}^d$. 

Then $f\in H^{\log}(\mathbb{R}^d)$ if, and only if, $\tau_{x_0}f \in H^{\log}(\mathbb{R}^d)$.
\end{lemma}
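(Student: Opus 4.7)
The plan is to exploit the translation invariance of convolution to transfer the problem from the maximal function to the density $\Psi$, and then to verify that $\Psi$ itself is essentially translation invariant up to a constant depending only on $x_0$.

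First, I would observe that for any $\epsilon>0$ we have the identity $(\tau_{x_0}f)\ast\phi_\epsilon(x) = (f\ast\phi_\epsilon)(x-x_0)$, which follows at once from the definition of $\tau_{x_0}$ on tempered distributions by pairing against $\phi_\epsilon(x-\cdot)$. Taking the supremum over $\epsilon>0$ yields the pointwise identity
\[
    M_\phi(\tau_{x_0}f)(x) = M_\phi(f)(x-x_0), \qquad x \in \mathbb{R}^d.
\]
Therefore, after the change of variables $y=x-x_0$, the defining integral for membership in $H^{\log}$ becomes
\[
    \int_{\mathbb{R}^d}\Psi(x, M_\phi(\tau_{x_0}f)(x))\,dx = \int_{\mathbb{R}^d}\Psi(y+x_0, M_\phi(f)(y))\,dy.
\]
The task thus reduces to proving that $\Psi(y+x_0,t)\sim_{x_0}\Psi(y,t)$ uniformly in $y\in\mathbb{R}^d$ and $t\geq 0$.

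This last comparison is the only substantive step, and it follows from the elementary observation that $\log(e+|y+x_0|)$ and $\log(e+|y|)$ are comparable with constants depending only on $|x_0|$. Indeed, from $|y+x_0|\leq |y|+|x_0|$ one obtains $\log(e+|y+x_0|)\leq \log(e+|y|)+\log(1+|x_0|)$, and since $\log(e+|y|)\geq 1$, this gives $\log(e+|y+x_0|)\leq (1+\log(1+|x_0|))\log(e+|y|)$. The reverse estimate is symmetric. Plugging these into the denominator $\log(e+t)+\log(e+|y+x_0|)$ and comparing with $\log(e+t)+\log(e+|y|)$ yields
\[
    c(x_0)\,\Psi(y,t)\leq \Psi(y+x_0,t)\leq C(x_0)\,\Psi(y,t)
\]
with positive constants $c(x_0)$, $C(x_0)$ independent of $y$ and $t$.

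Combining these two steps gives that $\int\Psi(y+x_0, M_\phi(f)(y))\,dy$ is finite if and only if $\int\Psi(y,M_\phi(f)(y))\,dy$ is, proving both implications of the lemma at once. I do not foresee a serious obstacle here; the only mild point requiring care is confirming that the comparison constants for $\log(e+|y+x_0|)$ are indeed uniform in $y$, which is handled by the simple estimate above.
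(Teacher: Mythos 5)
Your proof is correct and follows essentially the same route as the paper: both reduce to the identity $M_\phi(\tau_{x_0}f)(x)=M_\phi(f)(x-x_0)$, change variables, and then compare the weight $\log(e+|x-x_0|)$ with $\log(e+|x|)$. The only difference is cosmetic --- the paper verifies the comparison by splitting into the regions $|x|>4|x_0|$ and $|x|\leq 4|x_0|$, whereas you obtain it in one stroke from $\log(e+a+b)\leq(1+\log(1+b))\log(e+a)$, which is a slight streamlining.
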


\begin{proof} Note that it suffices to prove that for any  $x_0 \in \mathbb{R}^d$ and $f \in H^{\log}(\mathbb{R}^d)$ one also has that $\tau_{x_0}f \in H^{\log}(\mathbb{R}^d)$. 

Towards this aim, fix an $x_0 \in \mathbb{R}^d$ and an $f \in H^{\log} (\mathbb{R}^d)$. Observe that, by using a change of variables and the translation invariance of $M_{\phi}$, we may write 
$$ I: = \int_{\mathbb{R}^d} \frac{M_{\phi} (\tau_{x_0} f ) (x) }{ \log (e + |x|) + \log (e + M_{\phi} (\tau_{x_0} f) (x) ) } dx 
$$ as
$$ I = \int_{\mathbb{R}^d} \frac{M_{\phi} ( f ) (x ) }{ \log (e + |x-x_0|) + \log (e + M_{\phi} ( f) (x) ) } dx.  $$
To prove that $I < \infty$, we split
$$ I = I_1 + I_2 ,$$
where
$$ I_1 := \int_{ |x| > 4 |x_0| } \frac{M_{\phi} ( f ) (x ) }{ \log (e + |x-x_0|) + \log (e + M_{\phi} ( f) (x) ) } dx $$
and
$$ I_2 := \int_{ |x| \leq 4 |x_0| } \frac{M_{\phi} ( f ) (x ) }{ \log (e + |x-x_0|) + \log (e + M_{\phi} ( f) (x) ) } dx . $$
To show that $I_1 < \infty$, observe that for $|x| > 4 |x_0|$ one has
$$ \frac{4 |x-x_0| }{5}  < |x| < \frac{4 |x-x_0| } {3}  $$
and so,
\begin{align*}
 I_1 & \lesssim  \int_{ |x| > 4 |x_0| } \frac{M_{\phi} ( f ) (x ) }{ \log (e + |x|) + \log (e + M_{\phi} ( f) (x) ) } dx \\
& \leq \int_{ \mathbb{R}^d } \frac{M_{\phi} ( f ) (x ) }{ \log (e + |x|) + \log (e + M_{\phi} ( f) (x) ) } dx .
\end{align*}
Since $f \in H^{\log} (\mathbb{R}^d)$, the last integral is finite and we thus deduce that $I_1 < \infty$.  Next, to show that $I_2 < \infty$, we have 
\begin{align*}
I_2 & \leq \int_{ |x| \leq 4 |x_0| } \frac{M_{\phi} ( f ) (x ) }{ 1 + \log (e + M_{\phi} ( f) (x) ) } dx \\
& \lesssim_{|x_0|} \int_{ |x| \leq 4 |x_0| } \frac{M_{\phi} ( f ) (x ) }{ \log (e + |x|) + \log (e + M_{\phi} ( f) (x) ) } dx \\
& \leq  \int_{ \mathbb{R}^d } \frac{M_{\phi} ( f ) (x ) }{ \log (e + |x|) + \log (e + M_{\phi} ( f) (x) ) } dx
\end{align*}
and so, $I_2 < \infty$, as $f \in H^{\log} (\mathbb{R}^d)$. Therefore, $I < \infty$ and it thus follows that $\tau_{x_0} f \in H^{\log} (\mathbb{R}^d )$. \end{proof}

To obtain the desired variant of Zygmund's theorem, we shall use the fact that functions in $H^{\log} (\mathbb{R}^d)$ have mean zero; see  Lemma 1.4 in \cite{BIJZ}. For the convenience of the reader, we present a detailed proof of this fact below.

%%%%
\begin{lemma}[\cite{BIJZ}] If $f \in H^{\log} (\mathbb{R}^d)$ is a compactly supported integrable function, then $\int_{\mathbb{R}^d} f (y) dy = 0$. 
\end{lemma}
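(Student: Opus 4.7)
The plan is to argue by contraposition: assume $f$ is a compactly supported $L^1$ function with $c := \int_{\mathbb{R}^d} f(y)\, dy \neq 0$, and show that $f \notin H^{\log}(\mathbb{R}^d)$. By Lemma \ref{translation}, I may translate and therefore assume without loss of generality that $\mathrm{supp}(f) \subseteq \overline{B(0,R_0)}$ for some $R_0>0$; this simplifies the bookkeeping.

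The crux is a pointwise lower bound on $M_\phi(f)(x)$ valid for all $|x|$ large. Here I would exploit the very specific assumption that $\phi(z) = c_0 > 0$ on the half-ball $\{|z| \leq 1/2\}$ --- this is precisely why that condition is built into the definition of $\phi$. For $|x| \geq R_0$, choose the scale $\epsilon_x := 4|x|$: then for every $y \in \mathrm{supp}(f)$ one has
$$ |x - y| \leq |x| + R_0 \leq 2|x| = \epsilon_x/2, $$
so that $\phi_{\epsilon_x}(x-y) = c_0 \epsilon_x^{-d}$ identically in $y$. The convolution therefore collapses to
$$ (f \ast \phi_{\epsilon_x})(x) = c_0 \epsilon_x^{-d} \int_{\mathbb{R}^d} f(y)\, dy = \frac{c_0\, c}{(4|x|)^d}, $$
yielding $M_\phi(f)(x) \geq A |x|^{-d}$ for all $|x| \geq R_0$, where $A := |c_0 c|\cdot 4^{-d} > 0$.

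From here the rest is a direct calculation. Since $\Psi(x, \cdot)$ is increasing, I would bound
$$ \int_{\mathbb{R}^d} \Psi(x, M_\phi(f)(x))\, dx \geq \int_{|x| \geq R_1} \frac{A |x|^{-d}}{\log(e + A|x|^{-d}) + \log(e + |x|)}\, dx, $$
with $R_1 \geq R_0$ chosen large enough that $A |x|^{-d} \leq 1$ on $\{|x| \geq R_1\}$, so that $\log(e + A|x|^{-d})$ is uniformly bounded there. Passing to polar coordinates, the $r^{d-1}$ surface Jacobian cancels the $r^{-d}$ factor and the integral reduces, up to a positive constant, to
$$ \int_{R_1}^{\infty} \frac{dr}{r \log(e + r)}, $$
which diverges. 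This contradicts the hypothesis $f \in H^{\log}(\mathbb{R}^d)$, forcing $c = 0$.

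The main conceptual step --- rather than an obstacle --- is pinpointing the correct scale $\epsilon_x \sim |x|$ on which $\phi_{\epsilon_x}$ is constant across $\mathrm{supp}(f)$. Once that scale is in hand, the $|x|^{-d}$ decay rate of $M_\phi(f)$ at infinity is forced, and the logarithmic divergence of the $L_\Psi$ integral is automatic; everything else is routine polar-coordinate bookkeeping.
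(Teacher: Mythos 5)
Your proposal is correct and follows essentially the same route as the paper: translate so that $\mathrm{supp}(f)$ is a ball centered at the origin, evaluate the convolution at scale $\epsilon = 4|x|$ where $\phi$ is constant on the relevant set to get $M_\phi(f)(x) \gtrsim |x|^{-d}\left|\int f\right|$, and conclude that $\Psi(x, M_\phi(f)(x))$ fails to be integrable at infinity. The only cosmetic difference is that you invoke the monotonicity of $\Psi(x,\cdot)$ directly where the paper cites its Lemma on $\Psi$, which is an equally valid (arguably cleaner) way to pass from the pointwise lower bound on $M_\phi(f)$ to the divergent integral.
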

%%%%

\begin{proof} Let $f$ be a given function in $ H^{\log} (\mathbb{R}^d)$ with compact support. In light of Lemma \ref{translation}, we may assume, without loss of generality,  that $f$ is supported in a closed ball $B_r$ centered at $0$ with radius  $r >0$, i.e. $\mathrm{supp} (f) \subseteq B_r := \{ x \in \mathbb{R}^d : |x| \leq r \}$. 

To prove the lemma, take an $x \in \mathbb{R}^d$ with $|x| > 2r$ and observe that, by the definition of $\phi_{\epsilon}$, we can take $\epsilon=4|x|$ to get
\[| f \ast \phi_{\epsilon} (x) |=\frac{1}{\epsilon^d}\left|\int_{B_r}f(y)\phi\left(\frac{x-y}{\epsilon}\right)dy\right| \gtrsim \frac{1}{|x|^d} \cdot \left| \int_{B_r} f(y) dy \right|\]
as we then have $\phi(\epsilon^{-1}(x-y))=c_0$ for $y\in B_r$. Therefore, for all $|x| > 2r$ and $\epsilon=4|x|$, we have
$$ M_{\phi} (f) (x) \gtrsim \frac{1}{|x|^d} \cdot \Bigg| \int_{B_r} f(y) dy \Bigg| $$
and so, we deduce from Lemma \ref{decreasing} that
$$ \Psi (x, M_{\phi}(f) (x)) \gtrsim \frac{1}{|x|^d \log (e + |x|)} \cdot \Bigg| \int_{B_r} f (y) dy \Bigg| $$
for $|x|$ large enough.

Hence, if $\int f(y)dy \neq 0$, then the function $\Psi (x, M_{\phi} (f)(x)) $ does not belong to $ L^1 (\mathbb{R}^d)$, which is a contradiction.
\end{proof}

We are now ready to prove Theorem \ref{LlogL}.

%%%%

%%%%

\begin{proof}[Proof of Theorem \ref{LlogL}] Let $B$ denote the unit closed ball in $\mathbb{R}^d$. Fix a function $f$ with $\mathrm{supp} (f) \subseteq B $, $\int_B f (y) dy = 0$ and $f \in L \log \log L (B )$. First of all, observe that 
$$ M_{\phi} (f) (x) \lesssim M(f)(x) \quad \mathrm{for}\ \mathrm{all}\ x \in \mathbb{R}^d, $$
where $M(f)$ denotes the Hardy-Littlewood maximal function of $f$; see e.g. Theorem 2 on pp. 62--63 in \cite{Singular}. We thus deduce from Lemma \ref{decreasing} that
$$  \Psi (x, M_{\phi} (f)(x)) \lesssim \Psi (x, M(f)(x))  \quad \mathrm{for}\ \mathrm{all}\ x \in \mathbb{R}^d $$
and hence, by using Theorem \ref{Stein-type_lemma}, we obtain
\begin{equation}\label{local_bound}
 \int_{2 B } \Psi (x, M_{\phi}(f)(x)) dx \lesssim 1 + \int_{2 B } |f(x)| \log^+ \log^+ |f(x)| dx,
\end{equation}
where $2 B  : = \{x \in \mathbb{R}^d : |x| \leq 2\} $.

To estimate the integral of $\Psi (x, M_{\phi} (f) (x))$ for $x \in \mathbb{R}^d \setminus 2 B $, we shall make use of the cancellation of $f$. To be more specific, observe that if $|x|>2$ then for every $\epsilon < |x|/2$, one has that
$$f \ast \phi_{\epsilon}(x) = \frac{1}{\epsilon^d} \int_{B } f(y) \phi \Big( \frac{x-y}{\epsilon} \Big) dy = 0$$
since $|x-y|/\epsilon > 1$ whenever $y \in B $. Therefore, we may restrict ourselves to $\epsilon \geq |x|/2$ when $|x|>2$. Hence, for $\epsilon \geq |x|/2$, by exploiting the cancellation of $f$ and using a Lipschitz estimate on $\phi_{\epsilon}$, we obtain
\begin{align*}
|f \ast \phi_{\epsilon} (x)| &= \frac{1}{\epsilon^d} \Bigg| \int_B f(y)  \phi \Big( \frac{x-y}{\epsilon} \Big) dy \Bigg| = \frac{1}{\epsilon^d} \Bigg| \int_B f(y) \Big[  \phi \Big( \frac{x-y}{\epsilon} \Big) - \phi \Big( \frac{x}{\epsilon} \Big) \Big] dy \Bigg| \\
&\lesssim_{\phi} \frac{1}{\epsilon^{d+1}} \int_B |y \cdot f(y)| dy \lesssim \frac{1}{|x|^{d+1} } \Bigg[ 1 + \int_B |f(y)| \log^+ \log^+ |f(y)| dy \Bigg].
\end{align*}
We thus deduce that, for every $x \in \mathbb{R}^d \setminus 2 B $,
$$ |M_{\phi} (f) (x)| \lesssim \frac{1}{|x|^{d+1}} \Bigg[1 + \int_B |f(y)| \log^+ \log^+ |f(y)| dy \Bigg] $$
and so, 
\begin{align*}
& \int_{\mathbb{R}^d \setminus 2 B } \Psi (x, M_{\phi} (x)) dx  \\
&  \lesssim \Bigg[ 1 + \int_B |f(y)| \log^+ \log^+ |f(y)| dy \Bigg] \cdot  \int_{\mathbb{R}^d \setminus 2 B } \frac{1} {|x|^{d+1 }\log (e +|x|)} dx \\
& \lesssim 1 + \int_B |f(y)| \log^+ \log^+ |f(y)| dy,
\end{align*}
as desired. Therefore, Theorem \ref{LlogL} is now established by using the last estimate combined with \eqref{local_bound}.
\end{proof}

%%%%%%%%%%%%%%% 
%%%%%%%%%%%%%%% 
\subsection{A partial converse}\label{periodic}

As in the classical setting of the real Hardy space $H^1$, see \cite{Stein_LlogL}, Theorem \ref{LlogL} has a partial converse. To be more precise, if a function $f$ is positive on an open set $U$ and $f$ belongs to $ H^{\log} (\mathbb{R}^d)$, then the function $f \in L \log \log L (K)$ for every compact set $K\subset U$. 

Indeed, to see this, note that if $f$ is as above then
$$ M_{\phi} (f) (x) \gtrsim M (f \cdot \eta_K ) (x) \quad \mathrm{for}\ \mathrm{all} \ x \in K,$$
where $\eta_K$ is an appropriate Schwartz function with $\eta_K \sim 1$ on $K$; see e.g. Section 5.3 in Chapter III in \cite{Big_Stein}. Hence, by using Lemma \ref{decreasing} and Theorem \ref{Stein-type_lemma}, we get
\begin{align*}
 \int_{\mathbb{R}^d}  \Psi (x, M_{\phi} (f) (x) ) dx \geq  \int_K  \Psi (x, M_{\phi} (f) (x) ) dx &\gtrsim \int_K \Psi (x, M (\eta_K \cdot f) (x) ) dx \\
&  \gtrsim  1 +   \int_K |f(x)| \log^+ \log^+ |f(x)| dx. 
\end{align*}

%%%%%%%%%%%%%%%%%%%%%%
%%%%%%%%%%%%%%%%%%%%%%
%%%%%%%%%%%%%%%%%%%%%%
%%%%%%%%%%%%%%%%%%%%%%

\section{Variants in the periodic setting} Following \cite{BIJZ}, define $H^{\log} (\mathbb{D})$ to be the space of all holomorphic functions $F$ on the unit disk $\mathbb{D}$ of $\mathbb{C}$ such that
$$ \sup_{0 < r < 1} \int_0^{2\pi} \frac{|F (r e^{i \theta })|}{ \log (e + |F (r e^{i \theta})|) } d \theta < \infty .$$

For $0<q \leq \infty$, let $H^q (\mathbb{D})$ denote the classical Hardy space on $\mathbb{D}$ consisting of analytic functions $F$ having
\[\sup_{0<r<1}\int_{0}^{2\pi}|F(re^{i\theta})|^pd\theta<\infty;\]
see for instance \cite{Duren}. Then 
$$ H^1 (\mathbb{D}) \subsetneq H^{\log} (\mathbb{D}) \subsetneq H^p (\mathbb{D}) \quad \mathrm{for} \ \mathrm{all} \ 0<p<1 .$$ Hence, if $F \in H^{\log} (\mathbb{D})$ then $F$ 
has a non-tangential limit $F^*$ at almost every point of $\mathbb{T}=\partial \mathbb{D}$, and this non-tangential limit lies in $L^p(\mathbb{T})$ for $0 < p <1$. See \cite[Theorem 2.2]{Duren} for details.  Moreover, by using \cite[Proposition 8.2]{BIJZ}, one may identify $ H^{\log} (\mathbb{D})$ with the space of all measurable functions $f$ on the torus such that
$$\int_0^{2\pi} \Psi_0 \Big(  \sup_{0 < r < 1}  |  P_r \ast f  ( \theta)| \Big) d \theta < \infty , $$
where $\Psi_0 (t) : = t \cdot [\log (e +t)]^{-1}$ ($t \geq 0$) and for $0<r<1$, $\theta \in [0,2\pi)$, 
\[P_r (\theta) := \frac{1-r^2}{1 -2r \cos (\theta) + r^2}\]
denotes the Poisson kernel in the unit disk.  

There is a periodic version of Theorem \ref{Stein-type_lemma}, namely $M(f) \in L_{\Psi_0} (\mathbb{T})$ if, and only if, $f \in L \log \log L (\mathbb{T})$. Combining this with Lemma \ref{decreasing}, one obtains the following result.

\begin{proposition}\label{inclusion} One has the inclusion
$$ L \log \log L (\mathbb{T}) \subseteq  H^{\log} (\mathbb{T}) .$$
\end{proposition}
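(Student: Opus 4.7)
The plan is to use the identification, noted just before the proposition, between $H^{\log}(\mathbb{T})$ and the space of measurable functions $f$ on $\mathbb{T}$ satisfying
\[
\int_0^{2\pi} \Psi_0\Bigl( \sup_{0<r<1} |P_r \ast f(\theta)| \Bigr) d\theta < \infty,
\]
where $\Psi_0(t) = t/\log(e+t)$. So given $f \in L\log\log L(\mathbb{T})$, the goal is to bound this quantity by a constant multiple of $1 + \int_{\mathbb{T}} |f(\theta)| \log^+\log^+ |f(\theta)| d\theta$, which is finite by hypothesis.

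First I would recall the classical pointwise domination of the Poisson maximal function by the Hardy-Littlewood maximal function on the torus, namely
\[
\sup_{0<r<1} |P_r \ast f(\theta)| \lesssim M(f)(\theta) \qquad \text{for a.e. } \theta \in \mathbb{T},
\]
which follows from a standard approximation-of-the-identity argument since the Poisson kernel has a radially decreasing integrable majorant (see for instance Chapter III of \cite{Big_Stein}). Next, since $\Psi_0$ is increasing and satisfies the quasi-doubling estimate \eqref{eq:doubling} derived in Section \ref{steinproof}, one obtains
\[
\Psi_0\Bigl( \sup_{0<r<1} |P_r \ast f(\theta)| \Bigr) \lesssim \Psi_0 (M(f)(\theta)) \qquad \text{for a.e. } \theta \in \mathbb{T}.
\]

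Then I would invoke the periodic variant of Theorem \ref{Stein-type_lemma} stated in the text, which tells us that $M(f) \in L_{\Psi_0}(\mathbb{T})$ precisely when $f \in L\log\log L(\mathbb{T})$; the proof of this periodic variant is a routine adaptation of the argument given in Section \ref{steinproof}, as on the compact set $\mathbb{T}$ the weight $\Psi(x,t)$ of Lemma \ref{decreasing} is comparable to $\Psi_0(t)$ uniformly in $x$. Integrating the previous pointwise inequality and combining with this yields
\[
\int_0^{2\pi} \Psi_0\Bigl( \sup_{0<r<1} |P_r \ast f(\theta)| \Bigr) d\theta \lesssim \int_0^{2\pi} \Psi_0(M(f)(\theta)) d\theta < \infty,
\]
which places $f$ in $H^{\log}(\mathbb{T})$ via the identification above.

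There is no serious obstacle here: the proposition is essentially a repackaging of the periodic Stein-type theorem through the Poisson-versus-Hardy-Littlewood comparison. The only point that requires a small amount of care is verifying that the periodic analog of Theorem \ref{Stein-type_lemma} transfers cleanly; but since $\mathbb{T}$ has finite measure and the weight $\log(e+|x|)$ in $\Psi$ is bounded on any compact set, the proof carries over with only notational changes.
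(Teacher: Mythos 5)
Your argument is correct and coincides with the route the paper intends: the paper derives the proposition by combining the periodic version of Theorem \ref{Stein-type_lemma} with the pointwise domination of the Poisson maximal function by $M(f)$ and the monotonicity/doubling of $\Psi_0$, exactly as you spell out. The only difference is that you fill in the details the paper leaves implicit.
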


Moreover, arguing as in the previous section and using the necessity in Theorem \ref{Stein-type_lemma} as well as Proposition \ref{inclusion} and Lemma \ref{decreasing}, one can show that if $f \in H^{\log} (\mathbb{T})$ and $f$ is non-negative, then $f \in L \log \log L (\mathbb{T})$. 

\begin{proposition}\label{identification} 
One has $$ \{ f \in L \log \log L (\mathbb{T}) : f\geq 0 \  \mathrm{a.e.}\ \mathrm{on}\ \mathbb{T}\} = \{ f\in  H^{\log} (\mathbb{T}) : f \geq 0\  \mathrm{a.e.}\ \mathrm{on}\ \mathbb{T} \} .$$
\end{proposition}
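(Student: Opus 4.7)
The inclusion $\{f \in L\log\log L(\mathbb{T}) : f \geq 0\} \subseteq \{f \in H^{\log}(\mathbb{T}) : f \geq 0\}$ is immediate from Proposition \ref{inclusion}, so the real content is the reverse implication. My plan is to mirror the argument of Subsection \ref{periodic}, with the non-tangential Poisson maximal function playing the role of $M_\phi$ and the periodic version of Theorem \ref{Stein-type_lemma} replacing its $\mathbb{R}^d$ counterpart.

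Concretely, let $f \in H^{\log}(\mathbb{T})$ with $f \geq 0$ a.e. The identification attributed to \cite[Proposition 8.2]{BIJZ} in the paragraph above the proposition gives
\[\int_0^{2\pi}\Psi_0\!\left(\sup_{0<r<1}|P_r*f(\theta)|\right) d\theta < \infty,\]
with $\Psi_0(t)=t/\log(e+t)$. Since $f \geq 0$, the Poisson integrals $P_r * f$ are themselves non-negative, so the absolute values can be dropped. First I would invoke the classical pointwise lower bound for the Poisson kernel on the torus, $P_r(\theta) \gtrsim (1-r)^{-1}\chi_{\{|\theta| \leq 1-r\}}$ for $0 < 1-r < \pi$, which upon optimizing over the scale $\delta = 1-r$ yields
\[\sup_{0<r<1} P_r*f(\theta) \gtrsim M(f)(\theta) \quad \text{a.e. on } \mathbb{T},\]
where $M$ denotes the periodic Hardy--Littlewood maximal operator. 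Monotonicity of $\Psi_0$ then delivers $M(f) \in L_{\Psi_0}(\mathbb{T})$, and the necessity direction of the periodic Stein-type theorem stated just before Proposition \ref{inclusion} finally gives $f \in L\log\log L(\mathbb{T})$.

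The one place that merits attention is the pointwise lower bound $\sup_{0<r<1} P_r*f \gtrsim M(f)$ for non-negative $f$, which is the periodic analogue of the inequality $M_\phi f \gtrsim M(\eta_K f)$ exploited in Subsection \ref{periodic}. This is standard but must be verified with the correct normalization on $\mathbb{T}$; the advantage over the $\mathbb{R}^d$ case is that the Poisson kernel is positive everywhere on the torus, so no cutoff function $\eta_K$ is needed and one obtains a genuinely pointwise estimate without restriction to compact subsets. Everything else -- monotonicity of $\Psi_0$, the $H^{\log}$ identification via the Poisson maximal function, and the already-established periodic Stein-type theorem -- then assembles into the claim.
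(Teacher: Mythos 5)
Your proposal is correct and follows essentially the same route as the paper: the forward inclusion via Proposition \ref{inclusion}, the pointwise domination $\sup_{0<r<1} P_r \ast f \gtrsim M(f)$ for non-negative $f$ (the paper's inequality \eqref{pw_per}), and the necessity half of the periodic Stein-type theorem, which the paper re-derives inline from the weak-type lower bound \eqref{wt_reverse_periodic} rather than citing it as a black box. The only difference is the order of assembly, which is immaterial.
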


\begin{proof} Note that Proposition \ref{inclusion} implies that
\begin{equation}\label{incl_1}
\{ f \in L \log \log L (\mathbb{T}) : f\geq 0 \  \mathrm{a.e.}\ \mathrm{on}\ \mathbb{T}\} \subseteq \{ f \in  H^{\log} (\mathbb{T})   : f \geq 0\  \mathrm{a.e.}\ \mathrm{on}\ \mathbb{T} \} .
\end{equation}

To prove the reverse inclusion, take a non-negative function $f \in H^{\log} (\mathbb{T})$ and notice that it follows from the work of Stein \cite{Stein_LlogL} that
\begin{equation}\label{wt_reverse_periodic} 
 |\{ \theta \in \mathbb{T} : M (f) ( \theta ) > c_1\alpha \} | \geq\frac{c_2}{\alpha} \int_{\{ |f| > \alpha \}} |f( \theta )| d \theta, 
\end{equation}
where $c_1,c_2 > 0$ are absolute constants. Hence, by arguing as in the proof of Theorem \ref{Stein-type_lemma}, it follows from \eqref{wt_reverse_periodic} (noting that the periodic case is easier as one does not need to consider the contribution away from the support of $f$) that
\begin{equation}\label{reverse_periodic}
 \int_{\mathbb{T}} {\Psi}_0 (M (f)) (\theta) d \theta \gtrsim 1 + \int_{\mathbb{T}} |f(x)| \log^+ \log^+ |f(\theta)| d \theta . 
\end{equation}
Since $f \geq 0$ a.e. on $\mathbb{T}$, as in the Euclidean case, one has 
\begin{equation}\label{pw_per}
 \sup_{0< r < 1} | P_r \ast f (\theta) | \gtrsim M (f) (\theta) \quad \mathrm{for}\  \mathrm{a.e.}\ \theta \in \mathbb{T}. 
\end{equation}
Hence, by using \eqref{reverse_periodic}, \eqref{pw_per} and Lemma \ref{decreasing}, we deduce that $f \in L \log \log L (\mathbb{T})$ and so,
\begin{equation}\label{incl_2}
\{ f \in  H^{\log} (\mathbb{T})  : f \geq 0\  \mathrm{a.e.}\ \mathrm{on}\ \mathbb{T} \} \subseteq
\{ f \in L \log \log L (\mathbb{T}) : f\geq 0 \  \mathrm{a.e.}\ \mathrm{on}\ \mathbb{T}\}.
\end{equation}
The desired fact is a consequence of \eqref{incl_1} and \eqref{incl_2}.
\end{proof}

\subsection{{Some further applications}}\label{applications_periodic}

We conclude with some applications of Theorem \ref{generalstein} in the periodic setting.
The function \[\Psi(x,t)=\Psi  (t)=t \log^+t\,\log^+\log^+t\] appearing in \cite{Sjolin} satisfies the hypotheses of Theorem \ref{generalstein}, and we now determine which space maps into $L_{\Psi}$ via the maximal function. With the associated $\psi$ defined as before, an integration by parts yields
\[\int \frac{\psi(s)}{s}ds=\frac{1}{2}(\log^+s)^2\log^+\log^+s+\log^+s\log^+\log^+s-\frac{1}{4}(\log^+s)^2.\]
This allows us to conclude that, for this choice of $\Psi$, 
\[M(f)\in L_{\Psi}(\mathbb{T})\quad \textrm {if, and only if,}\quad  f\in L\log^2L\log\log L(\mathbb{T}).\]
Turning to the space $L\log\log L \log\log\log\log L$ appearing in Lie's paper \cite{Lie}, we can check where the maximal operator maps this space. Performing the appropriate computations, we obtain that
\[\int_{\mathbb{T}}\frac{M(f)}{\log(M(f)+e)}\log^+\log^+\log^+\log^+M(f)dx<\infty\]
if, and only if,
\[f\in L\log \log L\log \log \log \log L (\T).\]

Roughly speaking, the contents of Theorem \ref{generalstein} and the computations presented above can be summarized as follows. Let $\Phi_0$ be a given Orlicz function, namely $\Phi_0 : [0 , \infty) \rightarrow [0, \infty) $ is an increasing function with $\Phi_0 (0) = 0$ and $\Phi_0 (t) \rightarrow \infty$ as $t \rightarrow \infty$. Suppose that one can find non-negative, increasing functions $M,S$ with
$$ \Phi_0 (t) = M (t) \cdot S(t) \quad (t>0)$$
and such that, for $0 < \alpha < t$, one can easily compute
$$ F_{\alpha} (t) := \int_{\alpha}^t \frac{M'(s)}{s} d s $$
in closed form and, moreover, that there exists an $\alpha_0 >0$ with the property that for every $\alpha \geq \alpha_0$ one has
$$ F_{\alpha} (t) \cdot S(t) \gtrsim  \int_{\alpha}^t \Big( \frac{M (s)}{s} + F (s) \Big) \cdot S' (s) d s  \quad \mathrm{for}\ \mathrm{all} \ t \geq \alpha . $$
Then, by arguing as in Section \ref{steinproof}, one deduces the ``concrete'' relation
$$ f \in L_{\Phi_0} (\mathbb{T}) \quad \mathrm{if}, \ \mathrm{and}\ \mathrm{only}\ \mathrm{if}, \quad M(f) \in L_{F_{\alpha} \cdot S} (\mathbb{T}),$$
for any $\alpha \geq \alpha_0$.

\subsection*{Acknowledgments}
AS extends his thanks to Kelly Bickel and the rest of the Department of Mathematics at Bucknell University (Lewisburg, PA) for hospitality during a visit where part of this work was carried out.

%%%%%%%%%%%%%%%%%%%%%%
%%%%%%%%%%%%%%%%%%%%%%
%%%%%%%%%%%%%%%%%%%%%%
%%%%%%%%%%%%%%%%%%%%%%

\end{document}